\pgfplotsset{compat=newest}
\DeclareMathOperator{\PV}{PV}
\newcommand{\Rplus}{\mathbb{R}_+}
\newtheorem{theorem}{Theorem}
\newcommand{\vett}[2]{\begin{bmatrix}#1\\#2\end{bmatrix}}
\newlength\plotheight
\newlength\plotwidth
\begin{document}

%\NLA{1}{6}{00}{28}{00}

%\runningheads{D.A.~Bini, B.~Meini, F.~Poloni}
%{On the solution of an equation arising in MBTs}

\title{On the solution of a quadratic vector equation arising in Markovian Binary Trees}
\author{Dario A. Bini\footnote{Dipartimento di Matematica, Universit\`a di Pisa. Largo Pontecorvo 5, 56127 Pisa, Italy. \url{{bini,meini}@dm.unipi.it}}, Beatrice Meini\textsuperscript{$\ast$} and Federico Poloni\footnote{Scuola Normale Superiore. Piazza dei Cavalieri 7, 56126 Pisa, Italy. \url{f.poloni@sns.it}}}

\maketitle

%\cgsn{Publishing Arts Research Council}{98--1846389}

%\noreceived{}
%\norevised{}
%\noaccepted{}

\begin{abstract}
  We present some advances, both from a theoretical and from a computational
  point of view, on a quadratic vector equation (QVE) arising in Markovian
  Binary Trees. Concerning the theoretical advances, some irreducibility
  assumptions are relaxed, and the minimality of the solution of the QVE is
  expressed in terms of properties of the Jacobian of a suitable function.
  From the computational point of view, we elaborate on the Perron
  vector-based iteration proposed in \cite{perronarxiv}.  In particular we
  provide a condition which ensures that the Perron iteration converges to the
  sought solution of the QVE.  Moreover we introduce a variant of the
  algorithm which consists in applying the Newton method instead of a
  fixed-point iteration. This method has the same convergence behaviour as the
  Perron iteration, since its convergence speed increases for
  close-to-critical problems.  Moreover, unlike the Perron iteration, the
  method has a quadratic convergence. Finally, we show that it is possible to
  alter the bilinear form defining the QVE in several ways without changing
  the solution.  This modification has an impact on convergence speed of the
  algorithms.

\end{abstract}

\newcommand{\keywords}[1]{\paragraph{Keywords:} #1}
\keywords{Markovian binary tree, branching process, Newton method, Perron vector, fixed-point iteration}

\section{Introduction}
Markovian Binary Trees (MBTs) are a particular family of branching processes,
which are used to model the growth of populations consisting of several types
of individuals who evolve independently and may produce a variable number of offsprings during their lifetime.
MBTs have applications in biology, epidemiology and also in telecommunication
systems. We refer the reader to \cite{BeanKontoleonTaylor,HLR10} for definitions,
properties and applications.

One important issue related to MBTs is the computation of the extinction
probability of the population, which can be characterized as the minimal
nonnegative solution
\begin{align*}
x^\ast\in&\Rplus^N, &\text{with } \Rplus^N:=\{v \in \mathbb R^N : v_i \geq 0, i=1,\dots,N\},
\end{align*}
of the quadratic vector equation (QVE for short)
\begin{equation}\label{eq:qve}
x=a+b(x,x),
\end{equation}
where $a\in\Rplus^N$, $b: \Rplus^N\times \Rplus^N \to \Rplus^N$ is a
vector-valued bilinear form, such that the vector $e=(1,1,\dots,1)^T$ is always a solution of \eqref{eq:qve}. For writing the entries of $b$ in coordinates, we use the notation $b_{ijk}:=e_i^T b(e_j,e_k)$, where $e_\ell$ is the $\ell$th vector of the canonical basis. With this choice, 
\[
(b(x,y))_i=\sum_{j,k} b_{ijk} x_j y_k.
\]
In many papers the notation $b(s,t)=B(s \otimes t)$, with $B \in \Rplus^{N\times N^2}$ and $\otimes$ denoting the Kronecker product, is used instead; one can see that the two representations are equivalent. We favor the former, since it highlights the symmetry features of the problem. We mention the fact that the functions obtained by fixing the first or the second argument of the bilinear form, i.e., $b(y,\cdot)$ and $b(\cdot,z)$ for suitable $y,z\in \Rplus^N$, are linear maps from $\Rplus^N$ to itself, and thus they can be represented by $N\times N$ matrices with nonnegative entries.

The MBT is called subcritical, supercritical or critical if the spectral
radius $\rho(R)$ of the matrix
\begin{equation}\label{defR}
R: = b(e,\cdot)+ b(\cdot,e) 
\end{equation}
is strictly
less than one, strictly greater than one, or equal to one, respectively.

Under the stated assumptions, one can prove the existence of a minimal
nonnegative solution in the componentwise ordering. A proof using minimal
hypotheses is presented in \cite{qvearxiv}. In the subcritical and critical
cases the minimal nonnegative solution is the vector of all ones, while in the
supercritical case $x^\ast\le e$, $x^\ast\ne e$. Thus, only the supercritical
case is of interest for the computation of $x^\ast$.

Moreover, in the following we shall focus on the case in which $x^\ast>0$. It
is shown in \cite{qvearxiv} how to detect reliably the cases when this
property does not hold, and reduce them by projection to problems of lower
dimension with strictly positive minimal solution.

Several iterative methods have been proposed and analyzed for computing the
vector $x^\ast$. In \cite{BeanKontoleonTaylor} the authors propose two fixed
point iterations with linear convergence, called \emph{depth} and \emph{order}
algorithms.  Another linearly convergent algorithm, called \emph{thicknesses}
algorithm, is proposed in \cite{HLR10}. In \cite{HautphenneLatoucheRemiche}
and in \cite{HVH10} two variants of Newton's method are proposed.
A different algorithm, based on a Perron vector iteration, is proposed in
  \cite{perronarxiv}. This algorithm, unlike classical iterative methods,
  increases its convergence speed for close to critical problems.

  In this paper we provide theoretical and computational advances concerning
  the QVE \eqref{eq:qve}. We first show that the matrix $R$ of \eqref{defR}
  can be assumed irreducible, since if it were reducible, we may reduce the
  problem of solving \eqref{eq:qve} to the problem of solving QVEs of smaller
  dimension, whose associated matrix $R$ is irreducible.  Assuming that $R$ is
  irreducible, we provide a new characterization of the minimal nonnegative
  solution $x^\ast$, in terms of the properties of the Jacobian of the
  function $F(x)=x-a-b(x,x)$, evaluated at $x=x^\ast$.  This property, which
  complements the results in \cite{qvearxiv},  allows
  us to give a condition which ensures that the limit of the Perron
  vector-based iteration provides the sought solution $x^\ast$ of the
  quadratic vector equation. 

  Moreover, we introduce a variant of the Perron vector-based iteration, which
  consists in applying the Newton method instead of a fixed-point iteration.
  This method is quadratically convergent, and has the same convergence
  behaviour as the Perron iteration, as its convergence speed increases for
  close-to-critical problems. The number of iterations needed by this variant
  is usually lower than the number of iterations needed by the original Perron
  iteration. However, due to the larger complexity of the single iteration
  step, the Newton-based method is generally slower than the Perron iteration,
  in terms of total computational time.

  Finally, we show that it is possible to alter the bilinear $b(x,y)$ form
  definining the QVE in several ways without changing the solution.  This
  modification has an impact on convergence speed: in most examples, making
  the wrong choice can double the number of iterations needed. We show that,
  at least on the experiments reported, the best results are given by a
  symmetrization of the original bilinear form.

  The paper is organized as follows. In Section \ref{classical} we recall
  classical algorithms based on fixed point iterations, while in Section
  \ref{perron} we recall the Perron-based iteration.  In Section
  \ref{sec:rred} we discuss the case where the matrix $R$ of \eqref{defR} is
  reducible, and we reduce the QVE to smaller size QVEs whose associated
  matrix $R$ is irreducible. In Section \ref{sec:min} the minimality of the
  solution $x^\ast$ of the QVE is expressed in terms of properties of the
  Jacobian of the function $F(x)$ at $x=x^\ast$. This result is used in
  Section \ref{sec:lim} to ensure that the limit of the Perron-based iteration
  provides the sought solution $x^\ast$. The Newton version of the
  Perron-based iteration is proposed in Section \ref{sec:newt}.  In Section
  \ref{sec:choice} the choice of the bilinear form $b(x,y)$ is discussed. The
  results of the numerical experiments are presented and discussed in Section
  \ref{sec:ne}. We draw conclusions in Section \ref{sec:conc}.

\section{Classical iterations}\label{classical}
Several iterative methods have been proposed and analyzed for computing the vector $x^\ast$. In
\cite{BeanKontoleonTaylor} the authors propose two
iterations with linear convergence, called \emph{depth} and \emph{order} algorithms, which are defined respectively by the two linear equations
\begin{align*}
(I-b(\cdot,x_{k}))x_{k+1}&=a,\\
(I-b(x_{k},\cdot))x_{k+1}&=a.
\end{align*}
The \emph{thicknesses}
algorithm, still linearly convergent, is proposed in \cite{HLR10} and consists in alternating iterations of each of the two above methods.

In \cite{HautphenneLatoucheRemiche} the authors apply the Newton method to the map
\begin{equation}\label{F}
 F(x):=x-a-b(x,x),
\end{equation}
obtaining the iteration defined by
\begin{equation}\label{cn}
 (I-b(x_k,\cdot)-b(\cdot,x_k))x_{k+1}=a-b(x_k,x_k), 
\end{equation}
which converges quadratically. Its convergence speed is usually much higher than that of the previous, linearly-convergent iterations.
A modification of the Newton method, which increases slightly its convergence speed, has been proposed in \cite{HVH10}.

All these methods have probabilistic interpretations, in that their $k$-th iterate $x_k$ can be interpreted as the probability of extinction of the process restricted to a special subtree $\mathcal T_k$. Each of them
provides a sequence $\{x_k\}_k$ of nonnegative vectors, with
$x_0=(0,\ldots,0)^T$, which converges monotonically to the minimal nonnegative
solution $x^\ast$. A common feature of all these methods is that their
convergence speed slows down when the problem, while being supercritical,
gets {\em close to critical}, i.e., the vector $x^\ast$ approaches the vector
of all ones. This happens because the mean extinction time increases, and thus sampling larger and larger trees is needed to capture the behavior of the iteration.

\section{A Perron-vector-based iteration}\label{perron}
In \cite{perronarxiv}, the authors propose an iterative scheme based on a
different interpretation. Let us suppose for now that the nonnegative matrix
$R$, as defined in \eqref{defR}, is irreducible --- we discuss this assumption
in Section~\ref{sec:rred}. Then, since irreducibility depends only on the zero
pattern of the matrix, $b(u,\cdot)+b(\cdot,v)$ is irreducible for each $u,v
\in \Rplus^N$ with strictly positive entries.

If we set $y=e-x$, equation \eqref{eq:qve} becomes
\begin{equation}\label{eqy}
y=b(y,e)+b(e,y)-b(y,y).
\end{equation}
A special solution of
\eqref{eqy} is $y^\ast=e-x^\ast$, where $x^\ast$ is the minimal nonnegative solution of
\eqref{eq:qve}. Notice that $0\leq y^\ast\leq e$.
In the probability interpretation of Markovian Binary Trees, since $x^\ast$ represents the
extinction probability, then $y^\ast=e-x^\ast$ can be interpreted as survival probability. In
particular,  $y^\ast_i$ is the probability that a
colony starting from a single individual in state $i$ does not become extinct
in a finite time. The three summands in the right-hand side of \eqref{eqy} also admit an interesting probabilistic interpretation \cite{perronarxiv}.

If we set $H_y:=b(\cdot,e)+b(e-y,\cdot)$, equation \eqref{eqy} becomes
\begin{equation}\label{eqHy}
y=H_y y.
\end{equation}
If $H_y$ is nonnegative and irreducible (which happens for sure if $y<e$, in view of the irreducibility of $R$), then the Perron-Frobenius theorem implies that $\rho(H_{y^\ast})=1$ and $y^\ast$ is the Perron vector of the matrix $H_{y^\ast}$.

This interpretation allows to design new algorithms for computing $y^\ast$ and
$x^\ast$. Applying a functional iteration directly to \eqref{eqHy}, or the
Newton method, gives nothing new, since we just did a change of
variable. However, if we define the map $\PV(M)$ as the Perron vector of a
nonnegative irreducible matrix $M$, we may rewrite \eqref{eqHy} as
\begin{equation} \label{eqPV}
 y=\PV(H_y).
\end{equation}
We may apply a fixed-point iteration to solve \eqref{eqPV}, thus generating a
sequence $\{y_k\}_k$ of positive vectors such that the vector $y_{k+1}$ is the
Perron vector of the matrix $H_{y_k}$, i.e.,
\begin{equation} \label{eqPVk}
 y_{k+1}=\PV(H_{y_k}).
\end{equation}

A suitable normalization of the Perron vector, consistent with the solution, is needed to obtain a well-posed iteration.
An optimal normalization choice is suggested in \cite{perronarxiv}. If we take $w$ as the Perron vector of the nonnegative irreducible matrix $R^T$, then we may normalize $y_{k+1}$ so that
\begin{equation}\label{cannorm}
 w^T(y_{k+1}-b(y_{k+1},e)-b(e,y_{k+1})+b(y_{k+1},y_{k+1}))=0,
\end{equation}
i.e., we impose that the residual of \eqref{eqHy} for $y=y_{k+1}$ is
orthogonal to $w$. With this choice, one can prove \cite{perronarxiv} that the
convergence speed of the sequence $\{y_k\}_k$ defined in \eqref{eqPVk}, with
the normalization condition \eqref{cannorm}, is linear with a small
convergence factor for close-to-critical problems, and tends to superlinear as
the considered problems approach criticality.  Thus, although the convergence
of this method is linear, surprisingly its speed \emph{increases} as the
problem gets close to critical, unlike the classical iterations.

\section{Dealing with reducible $R$}\label{sec:rred}

The following result shows that when $R$ is reducible we can reduce a QVE to two smaller-dimension problems to be solved successively with a kind of back-substitution. Therefore, for the solution of a generic QVE, we only need to apply the Perron iteration to the case in which $R$ is irreducible.

\begin{theorem}
Suppose that, for a QVE \eqref{eq:qve} with $x^\ast>0$, we have
\[
 R=\begin{bmatrix}
    R_{11} & R_{12}\\
    0 & R_{22}
   \end{bmatrix},
\]
where $R_{11}$ is $M\times M$ and $R_{22}$ is $(N-M)\times(N-M)$.
 Let
\begin{align*}
 x=&\begin{bmatrix}x_1\\x_2\end{bmatrix},& x^\ast=&\begin{bmatrix}x_1^\ast\\x_2^\ast\end{bmatrix}, & a=&\begin{bmatrix}a_1\\a_2\end{bmatrix}
\end{align*}
be partitioned accordingly. Let
\begin{align*}
 P:=&\begin{bmatrix}
    I_M & 0_{M\times (N-M)}
   \end{bmatrix},&
 Q:=&\begin{bmatrix}
    0_{(N-M)\times M} & I_{N-M}
   \end{bmatrix},
\end{align*}
be the orthogonal projection on the first $M$ and last $N-M$ components respectively.
Let us define the bilinear form on $\Rplus^{N-M}$
\[
 b_2(u,v):=Qb(Q^Tu,Q^Tv).
\]
Moreover, for each $y\in \Rplus^{M+N}$, let us define
\begin{align*}
 T_{y}:=&I_M-Pb(\cdot,Q^Ty)P^T-Pb(Q^Ty,\cdot) P^T,&
 a_{y}:=&{T_{y}}^{-1}(a_1+Pb(Q^Ty,Q^Ty)),
\end{align*}
and the bilinear form on $\Rplus^M$
\[
 b_{y}(u,v):={T_{y}}^{-1}Pb(P^Tu,P^Tv).
\]
Then,
\begin{enumerate}
 \item $x$ solves \eqref{eq:qve} if and only if $T_{x_2}$ is nonsingular, and its block components $x_2$ and $x_1$ solve respectively the two quadratic vector equations
\begin{equation}\label{due}
 x_2=a_2+b_2(x_2,x_2)
\end{equation}
and
\begin{equation}\label{uno}
 x_1=a_{x_2}+b_{x_2}(x_1,x_1).
\end{equation}
\item If $x^\ast$ is the minimal solution to \eqref{eq:qve}, then $x_1^\ast$ and $x_2^\ast$ are the minimal solution to \eqref{uno} and \eqref{due} respectively.
\end{enumerate}

\end{theorem}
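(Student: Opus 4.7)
The reducibility of $R=b(e,\cdot)+b(\cdot,e)\geq 0$ in the stated block form, combined with nonnegativity of $b$, forces a structural constraint on the trilinear coefficients: since $(R_{21})_{ij}$ is a sum of nonnegative terms $b_{i+M,k,j}+b_{i+M,j,k}$ that must vanish individually, I obtain $b_{ijk}=0$ whenever $i>M$ and $\min(j,k)\leq M$. Equivalently, $Qb(P^Tu,z)=Qb(z,P^Tu)=0$ for every $u\in\R^M$, $z\in\R^N$, so $Qb(x,y)=b_2(Qx,Qy)$. Applying $Q$ to \eqref{eq:qve} then yields \eqref{due} immediately.

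Applying $P$ to \eqref{eq:qve} and decomposing $x=P^Tx_1+Q^Tx_2$, the four pieces of $Pb(x,x)$ split into a pure-$x_1$ quadratic term, two cross terms linear in $x_1$, and a pure-$x_2$ term. Collecting the cross terms on the left gives
\[
 T_{x_2}x_1=a_1+Pb(Q^Tx_2,Q^Tx_2)+Pb(P^Tx_1,P^Tx_1),
\]
and premultiplying by $T_{x_2}^{-1}$ when invertible produces \eqref{uno}. The converse direction of part 1 is obtained by running this computation backwards: starting from \eqref{due} and \eqref{uno} (under the invertibility hypothesis) one recovers $Q$-block and $P$-block of \eqref{eq:qve}, hence \eqref{eq:qve} itself.

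The delicate point is to establish the invertibility of $T_{x_2^\ast}$ claimed in part 1. I plan to deduce it from the fact (developed in Section~\ref{sec:min}) that the Jacobian $J(x^\ast)=I-b(x^\ast,\cdot)-b(\cdot,x^\ast)$ is a nonsingular M-matrix: its leading $M\times M$ principal submatrix $PJ(x^\ast)P^T$ is then also a nonsingular M-matrix, and a direct expansion shows it equals $T_{x_2^\ast}-L$ with $L:=Pb(P^Tx_1^\ast,\cdot)P^T+Pb(\cdot,P^Tx_1^\ast)P^T\geq 0$. Writing $T_{x_2^\ast}=I_M-N$ with $N\geq 0$, the M-property of $I_M-N-L$ gives $\rho(N+L)<1$, and the comparison $N\leq N+L$ yields $\rho(N)\leq\rho(N+L)<1$, so $T_{x_2^\ast}$ is itself a nonsingular M-matrix.

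For part 2, the minimality of $x_2^\ast$ for \eqref{due} is obtained by projecting via $Q$ the monotone iteration $z^{(k+1)}=a+b(z^{(k)},z^{(k)})$ from $z^{(0)}=0$, whose limit is $x^\ast$: its $Q$-component is precisely the analogous iteration for \eqref{due} from zero, and both limits coincide by construction with the respective minimal nonnegative solutions. For $x_1^\ast$, a short calculation using $e=a+b(e,e)$ together with $x_2^\ast\leq e_{N-M}$ shows that $e_M$ is a supersolution of \eqref{uno} at $x_2=x_2^\ast$; hence \eqref{uno} admits a minimal nonnegative solution $\tilde x_1\leq e_M$, and $\tilde x_1\leq x_1^\ast$ since $x_1^\ast$ solves \eqref{uno}. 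By the converse direction in part 1, the lift $(\tilde x_1,x_2^\ast)$ solves \eqref{eq:qve}, so minimality of $x^\ast$ forces $\tilde x_1\geq x_1^\ast$ and equality follows. The principal obstacle throughout is the invertibility step: it cannot be extracted from the block structure alone and genuinely relies on the M-matrix characterization of the minimal solution established in Section~\ref{sec:min}; the supersolution check for $e_M$ in the last step is short but essential to guarantee that \eqref{uno} admits a minimal solution to which the lifting argument can be applied.
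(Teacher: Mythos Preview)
Your decomposition of \eqref{eq:qve} into $P$- and $Q$-blocks, the extraction of the structural constraint $b_{ijk}=0$ for $i>M$, $\min(j,k)\leq M$, and the rearrangement into $T_{x_2}x_1=a_1+Pb(Q^Tx_2,Q^Tx_2)+Pb(P^Tx_1,P^Tx_1)$ all match the paper exactly.

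The invertibility step is where you diverge, and it creates two problems. First, part~1 is stated for an \emph{arbitrary} solution $x$, but your argument via the Jacobian at $x^\ast$ only addresses $T_{x_2^\ast}$. Second, you assume that $F'_{x^\ast}$ is a \emph{nonsingular} M-matrix; Section~\ref{sec:min} only establishes the M-matrix property (and Theorem~\ref{minimalmmatrix} moreover assumes $R$ irreducible, which is exactly what fails in the present setting). Without nonsingularity of $F'_{x^\ast}$ your spectral-radius comparison $\rho(N)\leq\rho(N+L)<1$ does not go through. The paper's route is both simpler and more general: from the identity above, the right-hand side is nonnegative and $x_1>0$, so the Z-matrix $T_{x_2}$ is an M-matrix directly---no detour through the full Jacobian, and the argument applies to every positive solution, not just the minimal one.

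Your treatment of part~2 is genuinely different from the paper's and is correct. For $x_2^\ast$ you project the monotone iteration $z^{(k+1)}=a+b(z^{(k)},z^{(k)})$ through $Q$, whereas the paper argues by contradiction: assuming a strictly smaller minimal $x_2$ for \eqref{due}, it uses the monotonicity $T_{x_2}^{-1}\leq T_{x_2^\ast}^{-1}$ to show $x_1^\ast$ is a supersolution of the corresponding \eqref{uno}, producing a smaller solution of \eqref{eq:qve}. For $x_1^\ast$ both arguments lift a minimal solution of \eqref{uno} back to \eqref{eq:qve}; you secure existence via the supersolution $e_M$, the paper via the supersolution $x_1^\ast$ for the smaller data. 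Your approach is a bit more constructive but leans on the (standard) fact that the basic fixed-point iteration from zero converges to the minimal solution.
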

\begin{proof}
First notice that since $P$ and $Q$ are projections on complementary subspaces, \eqref{eq:qve} holds if and only if it holds when projected on both, i.e.,
\begin{subequations}
\begin{align}
  x_1=&a_1+Pb(x,x), \label{Pproj}\\
  x_2=&a_2+Qb(x,x). \label{Qproj}
\end{align}
\end{subequations}
Since $R_{ij}=\sum_{k=1}^N (b_{ijk}+b_{ikj})$, it follows from the block structure of $R$ (and from $b_{ijk}\geq 0$) that $b_{ijk}=0$ whenever $i>M$ and either $j\leq M$ or $k\leq M$. This implies that the second block row of $b(u,v)$ depends only on the second block rows of $u$ and $v$. We can write this formally as $Qb(u,v)=Qb(Q^TQu,Q^TQv)$.
Then, \eqref{Qproj} is equivalent to \eqref{due}.

By exploiting bilinearity and the fact that $P^TP+Q^TQ=I_N$, we can rewrite \eqref{Pproj} as
\begin{align*}
 x_1=a_1+Pb(P^Tx_1,P^Tx_1)+Pb(P^Tx_1,Q^Tx_2)+Pb(Q^Tx_2,P^Tx_1)+Pb(Q^Tx_2,Q^Tx_2),
\end{align*}
or
\begin{align*}
 T_{x_2}x_1=a_1+Pb(Q^Tx_2,Q^Tx_2)+Pb(P^Tx_1,P^Tx_1).
\end{align*}
Since the right-hand side is nonnegative and $x_1$ is positive, the Z-matrix $T_{x_2}$ is an M-matrix. Therefore, we may invert it to get \eqref{uno}.
The steps in the above proof can be reversed provided $T_{x_2}$ is nonsingular, thus the reverse implication holds as well.

Let us now prove the second part of the theorem. Equation \eqref{due} admits a minimal solution due to the general existence theorem (since it admits at least a solution); suppose it is $x_2\neq x_2^\ast$; then, by minimality, $x_2\leq x_2^\ast$. The matrix $T_{x_2} \geq T_{x_2^\ast}$ is an M-matrix, thus $T_{x_2^\ast}^{-1} \geq T_{x_2}^{-1}$. Therefore, $a_{x_2}\leq a_{x_2^\ast}$ and $b_{x_2}\leq b_{x_2^\ast}$.
We have
\[
 x_1^\ast=a_{x_2^\ast}+b_{x_2^\ast}(x_1^\ast,x_1^\ast) \geq a_{x_2}+b_{x_2}(x_1^\ast,x_1^\ast),
\]
thus the equation \eqref{uno} has a supersolution, and this implies that it has a solution by \cite[Lemma~5]{qvearxiv}. Let $x_1$ be its minimal solution; then, $x$ is a solution to \eqref{eq:qve} by the first part of this theorem, but this is in contradiction with the minimality of $x^\ast$, since $x_2\lneqq x_2^\ast$.
Therefore $x_2^\ast$ is the minimal solution to \eqref{due}. If \eqref{uno} admitted a solution $x_1\lneqq x_1^\ast$, then by the first part of the theorem
\[
 \vett{x_1}{x_2^\ast}
\]
would be a solution to \eqref{eq:qve}, and this again contradicts the minimality of $x^\ast$.
\end{proof}

\medskip
Let $F'_x:=I-b(x,\cdot)-b(\cdot,x)$ be the Jacobian of the map $F(x)$ defined
in \eqref{F} (see \cite{HautphenneLatoucheRemiche,qvearxiv}).  Notice that if
$x>0$, then $F'_x$ has the same positivity pattern as $R$, and thus is
irreducible whenever $R$ is. Moreover, when $x=e$ is a solution to
\eqref{eq:qve}, then the all-ones vectors of suitable dimension solve
\eqref{due} and \eqref{uno}, thus the Perron vector-based iteration can be
applied to the reduced problems as well.

\section{An alternative characterization of minimality}\label{sec:min}
The following theorem provides a practical criterion to check the minimality
of a solution. 
\begin{theorem}\label{minimalmmatrix}
Let $x>0$ be a solution of \eqref{eq:qve} and assume that $R$ is irreducible. Then, $F'_x$ is an M-matrix if and only if $x$ is minimal.
\end{theorem}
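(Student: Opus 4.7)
My plan is to establish both implications separately, using the bilinear structure of $b$ combined with Perron--Frobenius arguments for nonnegative irreducible matrices.

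For the forward direction ($x=x^\ast$ implies $F'_x$ is an M-matrix), I would compare $x^\ast$ with the known solution $e$. Setting $v:=e-x^\ast\geq 0$ and subtracting the QVEs satisfied by $e$ and $x^\ast$, bilinearity yields
\[
v=b(v,e)+b(x^\ast,v)=Kv,\qquad K:=b(\cdot,e)+b(x^\ast,\cdot).
\]
Since both $e$ and $x^\ast$ are strictly positive, $K$ shares the zero pattern of $R$ and is therefore irreducible. Either $v=0$ (the subcritical/critical case, where $x^\ast=e$, so $F'_{x^\ast}=I-R$ is an M-matrix because $\rho(R)\leq 1$), or $v\neq 0$, in which case Perron--Frobenius forces $v>0$ and $\rho(K)=1$. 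The Jacobian component $M:=b(x^\ast,\cdot)+b(\cdot,x^\ast)$ satisfies $K-M=b(\cdot,v)\geq 0$ entrywise, so monotonicity of the spectral radius yields $\rho(M)\leq\rho(K)=1$, making $F'_{x^\ast}=I-M$ an M-matrix.

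For the converse, let $x$ be a solution with $F'_x$ an M-matrix, let $x^\ast$ denote the minimal nonnegative solution, and set $d:=x-x^\ast\geq 0$. Subtracting the two QVEs gives $d=b(d,x)+b(x^\ast,d)$, and substituting into the definition of $F'_x$ produces the key identity
\[
F'_x d = d-b(x,d)-b(d,x) = b(x^\ast-x,d) = -b(d,d)\leq 0.
\]
If $F'_x$ is nonsingular, then $(F'_x)^{-1}\geq 0$, whence $d=-(F'_x)^{-1}b(d,d)\leq 0$; combined with $d\geq 0$ this forces $d=0$ and $x=x^\ast$.

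The main obstacle is the singular M-matrix case. Here $F'_x$ inherits the irreducibility of $R$ (since $x>0$ implies $F'_x$ has the same zero pattern as $R$), so Perron--Frobenius supplies strictly positive left and right null vectors $u$ and $\hat v$. Left-multiplying the identity by $u^T$ forces $u^T b(d,d)=0$, and since $u>0$ and $b(d,d)\geq 0$ this yields $b(d,d)=0$. The identity then reduces to $F'_x d=0$, so $d=\alpha\hat v$ for some $\alpha\geq 0$. If $\alpha>0$ then $\hat v>0$ together with $b(\hat v,\hat v)=0$ and $b_{ijk}\geq 0$ forces every $b_{ijk}=0$, so $R=0$, contradicting irreducibility. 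Hence $\alpha=0$, i.e., $d=0$ and $x=x^\ast$.
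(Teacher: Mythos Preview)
Your proof is correct. The overall architecture---split the converse into the nonsingular and singular M-matrix cases, and use bilinearity to control $F'_x(\bar x-x)$---matches the paper, and your nonsingular case is essentially the paper's Taylor-expansion argument restricted to $\bar x=x^\ast$.

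Two pieces differ from the paper. First, the paper does not prove the forward implication (it cites \cite{qvearxiv}), whereas you give a self-contained argument via the auxiliary matrix $K=b(\cdot,e)+b(x^\ast,\cdot)$ and the comparison $M\leq K$. Second, in the singular case the paper argues by contradiction through the forward implication: if $x$ were not minimal, then $F'_{x^\ast}\gneqq F'_x$, and M-matrix monotonicity (using irreducibility) forces $F'_x$ to be nonsingular, contrary to assumption. Your route is more direct: the positive left null vector kills $b(d,d)$, so $d$ lies in the one-dimensional Perron kernel, and $d>0$ would force $b\equiv 0$ and hence $R=0$. Your argument avoids invoking the forward implication and gives a cleaner reason why the singular case cannot support a strictly larger solution; the paper's argument is shorter once the forward implication is taken as given.
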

\begin{proof}
 The implication ($x^\ast$ minimal) $\Rightarrow$ ($F'_{x^\ast}$ is an M-matrix) has been proved in \cite{qvearxiv}. We prove the reverse here. The proof is split in two different arguments, according to whether $F'_x$ is a singular or nonsingular M-matrix. 

 Let $F'_x$ be a nonsingular M-matrix, and let $\bar x$ be another nonnegative
 solution;
 we need to prove that $\bar x-x\geq 0$. From the Taylor expansion of $F(x)$
 (and the fact that $F''\leq 0$) we have
\[
 0=F(\bar x)=F(x)+F'_x(\bar x-x)+\frac12F''_x(\bar x-x,\bar x-x) \leq F'_x(\bar x-x),
\]
that is, $F'_x(\bar x-x)\geq 0$. It suffices to multiply by $(F'_x)^{-1}\geq 0$ to get $\bar x-x\geq 0$, as needed. 

Let now $F'_x$ be a singular M-matrix. Suppose that $x$ is not minimal, and $x^\ast \lneqq x$ is the minimal solution to \eqref{eq:qve}. Then, $F'_{x^\ast}\gneqq F'_{x}$ is a (singular or nonsingular) M-matrix, by the reverse implication of this theorem. Thus, by the properties of M-matrices, $F'_x$ must be a nonsingular M-matrix, which is a contradiction.
\end{proof} 

Notice that this characterization of minimality allows to deduce easily the fact, claimed above, that the solution $e$ is minimal only in the subcritical and critical cases.

\section{On the limit of the Perron iteration}\label{sec:lim}
The following result shows that, under reasonable assumptions, the limit of the Perron vector-based iteration is the minimal solution of \eqref{eq:qve}.
\begin{theorem}\label{perronlimit}
  Suppose that $R$ is irreducible, and that $x^\ast>0$. Suppose that the
  Perron iteration \eqref{eqPVk}, with normalizing condition \eqref{cannorm},
  converges to a vector $y^\ast$ such that $y^\ast \leq e$. Then, $x=e-y^\ast$
  is the minimal solution of \eqref{eq:qve}.
\end{theorem}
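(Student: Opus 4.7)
The plan is to pass to the limit in \eqref{eqPVk} and in the normalization \eqref{cannorm} to identify $y^\ast$ as a Perron vector of $H_{y^\ast}$ with Perron eigenvalue exactly $1$ --- that is, \eqref{eqHy}, equivalent to \eqref{eqy} --- so that $\hat x := e - y^\ast$ solves \eqref{eq:qve}. Minimality of $\hat x$ will then be obtained via the M-matrix criterion of Theorem \ref{minimalmmatrix}, using $y^\ast$ itself as the positive vector that witnesses $\rho(b(\hat x,\cdot)+b(\cdot,\hat x)) \leq 1$.

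For the first step, continuity of the Perron eigenpair of an irreducible nonnegative matrix lets us take limits in \eqref{eqPVk}: one obtains $H_{y^\ast} y^\ast = \lambda y^\ast$ with $\lambda = \rho(H_{y^\ast})$ and $y^\ast > 0$, where the irreducibility of $H_{y^\ast}$ follows from the irreducibility of $R$ together with $y^\ast < e$. Since $H_{y^\ast} y^\ast = b(y^\ast, e) + b(e, y^\ast) - b(y^\ast, y^\ast)$, the normalization \eqref{cannorm} in the limit reads exactly $w^T y^\ast = w^T H_{y^\ast} y^\ast = \lambda\, w^T y^\ast$. As $w > 0$ and $y^\ast > 0$ give $w^T y^\ast > 0$, this forces $\lambda = 1$, so $y^\ast$ satisfies \eqref{eqy} and $\hat x$ solves the QVE.

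For minimality, write $F'_{\hat x} = I - B$ with $B := b(\hat x, \cdot) + b(\cdot, \hat x) \geq 0$. A short computation using $\hat x = e - y^\ast$, the definition \eqref{defR}, and the identity \eqref{eqy} gives
\[
B y^\ast = b(e - y^\ast, y^\ast) + b(y^\ast, e - y^\ast) = R y^\ast - 2 b(y^\ast, y^\ast) = y^\ast - b(y^\ast, y^\ast) \leq y^\ast.
\]
Combined with $B \geq 0$ and $y^\ast > 0$, the Collatz--Wielandt characterization of the spectral radius yields $\rho(B) \leq 1$; hence $F'_{\hat x} = I - B$ is an M-matrix, and Theorem \ref{minimalmmatrix} concludes that $\hat x = x^\ast$.

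The main technical obstacle is securing the strict positivity used throughout: $y^\ast > 0$ (to conclude $\lambda = 1$ and to apply Collatz--Wielandt) and $\hat x > 0$ (a standing hypothesis of Theorem \ref{minimalmmatrix}). Both are automatic once $0 < y^\ast < e$ strictly, which is plausible under supercriticality since $y^\ast = 0$ and $y^\ast = e$ can be excluded as fixed points of the iteration; the truly delicate part would be ruling out partial saturation ($y^\ast_i = 1$ for some but not all $i$), which, if it occurred, would require invoking the block reduction of Section \ref{sec:rred} or restricting the argument to the support of $\hat x$.
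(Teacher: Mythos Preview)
Your approach is essentially the paper's: pass to the limit in \eqref{eqPVk} and \eqref{cannorm} to conclude $\lambda^\ast=1$, then invoke Theorem~\ref{minimalmmatrix}. The one place you diverge is the minimality step. The paper observes that $\rho(H_{y^\ast})=1$ makes $I-H_{y^\ast}$ a singular M-matrix, and since
\[
F'_{\hat x}=I-b(\hat x,\cdot)-b(\cdot,\hat x)\ \geq\ I-b(\hat x,\cdot)-b(\cdot,e)=I-H_{y^\ast}
\]
(because $b(\cdot,\hat x)\leq b(\cdot,e)$ when $y^\ast\geq 0$), the Z-matrix $F'_{\hat x}$ dominates an M-matrix and is therefore one as well. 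Your route via $By^\ast\leq y^\ast$ and Collatz--Wielandt is equally valid and arguably more transparent, though it requires $y^\ast>0$, whereas the paper's comparison argument does not.

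Your worry about strict positivity is largely self-inflicted. Once you have established that $\hat x=e-y^\ast$ solves \eqref{eq:qve}, the minimality of $x^\ast$ gives $\hat x\geq x^\ast>0$ immediately; this is exactly where the hypothesis $x^\ast>0$ does its work, and it supplies the $\hat x>0$ needed for Theorem~\ref{minimalmmatrix}. With $\hat x>0$ in hand, $H_{y^\ast}=b(\cdot,e)+b(\hat x,\cdot)$ has the same zero pattern as $R$, hence is irreducible, and then any nonnegative eigenvector for $\rho(H_{y^\ast})$ is strictly positive by Perron--Frobenius, giving $y^\ast>0$. So no block reduction or restriction to supports is needed; the only residual point (which the paper also leaves implicit) is that the limit $y^\ast$ is nonzero, so that $w^Ty^\ast>0$ and one can cancel in $w^Ty^\ast=\lambda^\ast\, w^Ty^\ast$.
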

\begin{proof}
 Let us first prove that the spectral radius of $H_{y^\ast}$ is 1.  The iterates of the Perron iteration satisfy
\begin{subequations}\label{perronit}
 \begin{gather}
 \lambda_{k+1} y_{k+1}=H_{y_k}y_{k+1},  \\
 w^T(y_{k+1}-H_{y_{k+1}}y_{k+1})=0.
 \end{gather}
\end{subequations}

By passing \eqref{perronit} to the limit, we get
\begin{subequations}
 \begin{gather}
 \lambda^\ast y^\ast=H_{y^\ast} y^\ast, \label{perronlim1} \\
 w^T(y^\ast-H_{y^\ast}y^\ast)=0. \label{perronlim2}
 \end{gather}
\end{subequations}
Notice that $\lambda^\ast$ is well-defined, as it may be defined as the common ratio between the components of $H_{y^\ast} y^\ast$ and those of $y^\ast$. We left-multiply  \eqref{perronlim1} by $w^T$ to get $w^T(\lambda^\ast y^\ast-H_{y^\ast} y^\ast)=0$, which, compared to \eqref{perronlim2}, tells us that $\lambda^\ast=1$. In particular, this implies that $x=e-y^\ast$ is a solution of \eqref{eq:qve}, as we may verify directly by back-substitution.

Moreover, $\rho(H_{y^\ast})=1$, and thus
$I-H_{y^\ast}=I-b(e-y^\ast,\cdot)-b(\cdot,e)$ is a singular M-matrix. Thus the
Z-matrix $F'_x=I-b(e-y^\ast,\cdot)-b(\cdot,e-y^\ast)\geq
I-b(e-y^\ast,\cdot)-b(\cdot,e)$ is an M-matrix, too. By
Theorem~\ref{minimalmmatrix}, this implies that $x=e-y^\ast$ is minimal.

\end{proof}

\section{The Perron--Newton method}\label{sec:newt}
We may also apply Newton's method for the solution of \eqref{eqPV}. 

We first recall the following result from \cite{perronarxiv}, which provides
an explicit form for the Jacobian of the iteration map $G(y)$ defining the
iteration \eqref{eqPVk} with the normalization \eqref{cannorm}, i.e.,
\begin{align*}
 G(y):=\text{the Perron vector of $H_y$, normalized s.t. } w^T\left(G(y)-H_{G(y)}G(y)\right)=0.
\end{align*}

\begin{theorem}
Let $y$ be such that $H_y$ is nonnegative and irreducible.
Let $u=G(y)$, and let
$v$ be such that $v^TH_y=\lambda v^T$, where $\lambda=\rho(H_y)$.
Then the Jacobian of the map $G$ at $y$ is
\begin{equation}\label{jac}
 JG_y=\left(I-\frac{u\sigma_1^T}{\sigma_1^T u}  \right) (H_y-\lambda I)^\dagger  \left(I-\frac{uv^T}{v^Tu}\right)b(\cdot,u),
\end{equation}
where 
\[
\sigma_1^T:=w^T(I-b(e-u,\cdot)-b(\cdot,e-u))
\]
and the symbol $\vphantom{M}^\dagger$ denotes the Moore--Penrose pseudo-inverse.
\end{theorem}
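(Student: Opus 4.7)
The plan is to obtain \eqref{jac} by implicit differentiation of the two conditions that define $u = G(y)$: the eigenvalue equation $H_y u = \lambda u$ with $\lambda = \rho(H_y)$ (which only determines $u$ up to scale), and the normalization $w^T\bigl(u - b(u,e) - b(e,u) + b(u,u)\bigr) = 0$, which is just \eqref{cannorm} after expanding $H_u u$. I would first invoke the Perron--Frobenius theorem to observe that, under the irreducibility hypothesis, $\lambda$ is a simple eigenvalue and both $\lambda$ and $u$ depend smoothly on $y$ near any admissible point, with $\ker(H_y-\lambda I) = \operatorname{span}(u)$, $\ker\bigl((H_y-\lambda I)^T\bigr) = \operatorname{span}(v)$, and $v^T u > 0$.

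Next, I would perturb $y \mapsto y + \varepsilon h$, write first-order perturbations $\delta u$ and $\delta\lambda$, and use $H_{y+\varepsilon h} = H_y - \varepsilon b(h,\cdot)$ to obtain, at first order,
\[
(H_y - \lambda I)\,\delta u = \delta\lambda\cdot u + b(h,u).
\]
Left-multiplying by $v^T$ annihilates the left-hand side and gives $\delta\lambda = -v^T b(h,u)/(v^T u)$; substituting back,
\[
(H_y - \lambda I)\,\delta u = \Bigl(I - \tfrac{uv^T}{v^T u}\Bigr) b(h,u) =: c.
\]
By construction $v^T c = 0$, so $c$ lies in the range of $H_y-\lambda I$; hence $(H_y-\lambda I)^\dagger c$ is a particular solution, and the full solution set is $(H_y-\lambda I)^\dagger c + \alpha u$ as $\alpha$ ranges over $\R$.

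To pin down $\alpha$, I would differentiate the normalization condition. Applying the Leibniz rule to each bilinear term yields
\[
\sigma_1^T\,\delta u = 0, \quad \sigma_1^T = w^T\bigl(I - b(e-u,\cdot) - b(\cdot,e-u)\bigr),
\]
exactly the covector appearing in the statement. Imposing this on $(H_y-\lambda I)^\dagger c + \alpha u$ determines $\alpha = -\sigma_1^T (H_y-\lambda I)^\dagger c/(\sigma_1^T u)$, giving
\[
\delta u = \Bigl(I - \tfrac{u\sigma_1^T}{\sigma_1^T u}\Bigr)(H_y-\lambda I)^\dagger c.
\]
Since $h$ was arbitrary and its dependence enters only through $b(h,u) = b(\cdot,u)\,h$, reading off the coefficient of $h$ produces \eqref{jac}.

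The delicate points I would expect to check carefully are: (i) that $c$ really lies in the range of $H_y-\lambda I$, which is automatic from the Fredholm-alternative identity $v^T c = 0$, so the pseudoinverse returns a genuine solution rather than only a least-squares one; and (ii) the nondegeneracy $\sigma_1^T u \neq 0$, needed for the normalization to fix $\alpha$ uniquely and for $G$ to be differentiable at $y$ --- this is implicit in the well-posedness of \eqref{eqPVk}--\eqref{cannorm} and should be inherited from the hypotheses of the iteration rather than reproved here.
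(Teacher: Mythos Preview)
The paper does not prove this theorem; it merely recalls it from the reference \cite{perronarxiv}, so there is no in-paper proof to compare against. Your argument is correct and is the standard first-order perturbation computation for a simple eigenvalue combined with implicit differentiation of the normalization constraint; this is almost certainly the route taken in the cited source as well. The two technical caveats you flag --- that $c$ lies in $\operatorname{range}(H_y-\lambda I)=v^\perp$ so the pseudoinverse produces an exact solution, and that $\sigma_1^T u\neq 0$ is needed for the projector to be well defined --- are exactly the right points to note.
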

With the aid of this formula, we may define the Perron--Newton method for the solution of \eqref{eq:qve} as in Algorithm~\ref{algo:pn}.
\begin{algorithm}
\caption{The Perron--Newton algorithm}\label{algo:pn}
\SetKwInOut{Input}{input}
\Input{the bilinear form $b$ (note that $a$ is not necessary --- in fact it can be deduced from $e=a+b(e,e)$)}
\Input{the normalization vector $w>0$ (a good choice is taking the Perron vector of $R^T$, see \cite{perronarxiv})}
$y\leftarrow e$\;
\While{a suitable stopping criterion is not satisfied}{
$u\leftarrow G(y)$\;
$J\leftarrow JG_y$ (computed using \eqref{jac})\;
$y\leftarrow y-(I-J)^{-1}(y-u)$\;
}
\uIf{$0\leq y \leq e$}
{$x\leftarrow e-y$\;}
\Else{(error: no convergence)\;}
\end{algorithm}

A step of Newton's method basically requires a step of the Perron vector-based fixed-point iteration associated with \eqref{eqPV}, followed by the computation of a Moore--Penrose pseudoinverse and the solution of a linear system. Thus its cost is larger than, but still comparable to, the cost of a step of the Perron vector-based functional iteration. This is compensated by the fact that the Newton method has quadratic convergence, and thus requires less iterations. 

The convergence properties of the Perron--Newton method for close-to-critical
problems are similar to those of the Perron vector-based functional iteration.
For close-to-critical
problems one has $x^\ast \approx e$, therefore $\rho(JG_{y^\ast}) \approx
0$. Hence, by the Newton--Kantorovich theorem
\cite{OrtegaRheinboldt} there is convergence for sufficiently
close-to-critical problems. The proof of Theorem~\ref{perronlimit} can be
easily adapted to show that the limit point must correspond to the minimal
solution of \eqref{eq:qve} if $0\leq y^\ast \leq e$. Moreover, since
$\rho(JG_{y^\ast}) \approx 0$, the matrix to invert is well-conditioned and
$y-G(y)$ has a simple zero.

\section{On the choice of the bilinear form $b$}\label{sec:choice}
Equation~\eqref{eq:qve}, and thus its solution, depend only on the
quadratic form $b(t,t):=B(t\otimes t)$; however, there are different ways to extend it to a (nonnecessarily symmetric)
bilinear form $b(s,t)$. Namely, for each $i$ and each $j\neq k$, we may alter simultaneously $b_{ijk}$ and $b_{ikj}$, as long as their sum remains invariant, and they both remain positive. For example, we may switch the two terms in every such pair, obtaining the bilinear form $b^T(s,t):=b(t,s)$.

Some of the solution algorithms depend essentially on the
choice of the bilinear extension: for instance, the \emph{depth} and
\emph{order} algorithms. It is easy to see that the \emph{depth} algorithm
applied to $b^T$ coincides with \emph{order} applied to $b$, and vice
versa. Instead, in the classical Newton's method \eqref{cn}, the bilinear form
appears only in the expressions $b(x_k,\cdot)+b(\cdot,x_k)$ and $b(x_k,x_k)$,
which are unaffected by this change. Thus the classical Newton method stays the same no matter which bilinear extension we choose.

On the other hand, one can see that the Perron-vector based functional
iteration and its Newton based version do depend on the bilinear extension,
and their convergence speed is affected by this choice. The expression of the
bilinear form ultimately reflects a modeling aspect of the problem. While in
the original definition of a branching process an individual splits into two
new ones in two different states, it is often convenient to identify one as
the ``mother'' and one as the ``child'', even if this distinction is
artificial.  In fact, we can safely redefine who is the mother and who is the
child, as long as we do not change the total probability that an individual in
state $i$ originates two offsprings in states $j$ and $k$. This corresponds
exactly to changing the bilinear form $b$ in the described way.

Among the possibilities for the modifications of $b$, we list the following.
\begin{description}
 \item[Transposition] $b^T(s,t):=b(t,s)$
 \item[Symmetrization] $b^S(s,t):=\frac12 \left(b(s,t)+b^T(s,t)\right)$
 \item[Desymmetrization 1] 
\[ (b^{D1})_{ijk}:=\begin{cases}
             b_{ijk}+b_{ikj} & \text{if $j<k$}\\
	     b_{ijk} & \text{if $j=k$}\\
	     0 & \text{if $j>k$}\\
            \end{cases}
\]
 \item[Desymmetrization 2] 
\[ (b^{D2})_{ijk}:=  \left(b^{T}\right)^D1=\begin{cases}
             b_{ijk}+b_{ikj} & \text{if $j>k$}\\
	     b_{ijk} & \text{if $j=k$}\\
	     0 & \text{if $j<k$}\\
            \end{cases}
\]
\end{description}
In the following section, we report numerical experiments performed with the above bilinear extensions and compare the computational times. We do not have a definitive result on which choice gives the best convergence: as is the case with the \emph{depth} and \emph{order} algorithms, the optimal bilinear extension may vary in different instances of the problem.

\section{Numerical experiments}\label{sec:ne}
We performed numerical experiments to assess the speed of the proposed methods. The tests were performed on a laptop (Intel Pentium M 735 1.70Ghz) with Matlab R2010a and considered two sample parameter-dependent problems.
\begin{description}
 \item[P1] a small-size Markovian binary tree with branches of varying length, described in \cite[Example~1]{HautphenneLatoucheRemiche}. It is an MBT of size $N=9$
depending on a parameter $\lambda$, which is critical for $\lambda \approx
0.85$ and supercritical for larger values of $\lambda$.
 \item[P2] a random-generated MBT of larger size ($N=100$). It is created by generating a random bilinear form $b$, choosing a suitable $a$ so that $a+b(e,e)=Ke$ for some $K$, and then scaling both $a$ and $b$ in order to eliminate $K$. We report the Matlab code used for its generation in Algorithm~\ref{algo:randmbt}.
\begin{algorithm}
 \caption{Generating a random MBT}\label{algo:randmbt}
\SetKwInOut{Input}{input}
\Input{the size $N$ of the MBT and a parameter $\lambda>0$}
\lstinline{e=ones(N,1)}\;
\lstinline{rand('state',0)}\;
\lstinline{b=rand(N,N*N)}\;
\lstinline{K=max(b*kron(e,e))+$\lambda$}\;
\lstinline{a=K*e-b*kron(e,e)}\;
\lstinline{a=a/K}\;
\lstinline{b=b/K}\;
\end{algorithm}
Larger choices of the parameter $\lambda$ increase the values of $a$, i.e., the probability of immediate death, and thus enlarge the extinction probability making the process closer to critical. With $N=100$, the process is critical for $\lambda \approx 4920$.
\end{description}

\begin{figure}
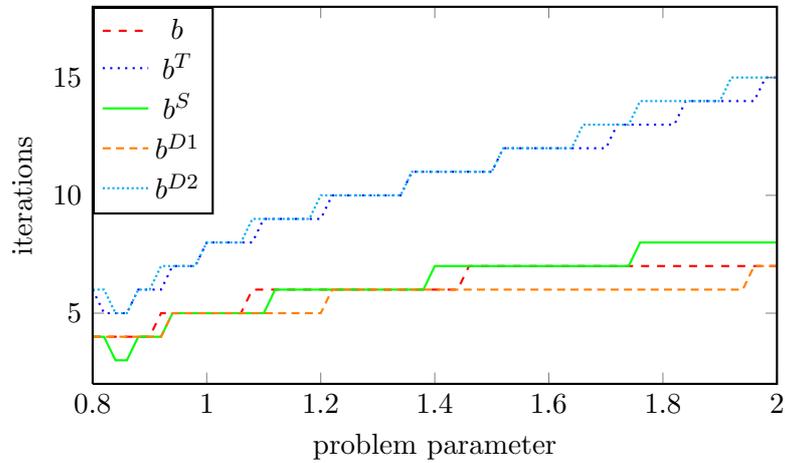

 \begin{center}
   \setlength\plotheight{5cm}
  \setlength\plotwidth{9cm}
%  \beginpgfgraphicnamed{E1-slides-out}

\tikzstyle{every pin}=[fill=white,
draw=black,
font=\footnotesize]
\pgfplotsset{every axis/.append style={thick}}

% [inline block 0: 4 envs, 70844 chars -> data_tex | \begin{tikzpicture} \pgfplotsset{every axis legend/.append style={at={(0.52,0.52)},anchor=west}}...]

\caption{Number of steps needed for the Perron iteration for P2/ with several variants of the bilinear form}\label{b2}
\end{figure}

Figure~\ref{p1} shows a plot of the computational times for classical Newton and the two Perron vector-based methods for different values of the parameter $\lambda$. Depth, order and thicknesses are not reported in the graph as they are much slower than these methods, as also shown by the experiments in \cite{HautphenneLatoucheRemiche}. While in close-to-critical
cases the time for CN has a spike, the ones for PN and PI seems to decrease. While having in theory worse convergence properties, the Perron iteration is faster than the Perron Newton method: the additional overhead of the pseudoinverse and of the computation of both left and right dominant eigenvector do not make up for the increased convergence rate.

Figure~\ref{p2} shows the corresponding plot for the larger problem P2. We point out that two different methods were used to compute the Perron vectors in the two problems. For P2, we use \lstinline{eigs}, which is based on an Arnoldi method \cite{arpack}. On the other hand, for P1, due to the really small size of the problem, it is faster to compute a full eigenvector basis with \lstinline{eig} and then select the Perron vector.

With this choice, both the Perron iteration and Perron--Newton method are faster than the classical Newton method on this larger-size problem, in the close-to-critical region.

Figure~\ref{b1} reports the number of iteration (which essentially grows as the CPU time) for the Perron iteration on P1 with several alternative bilinear forms equivalent to $b$. We see that among the two possible ``branch switches'', in this example the iteration with $b$ converges faster than the one with $b^T$. Clearly this cannot be a general result: due to the involutory nature of this transposition operation, if we started with $\tilde b:=b^T$, then the faster choice would have been $\tilde b^T=(b^T)^T=b$. Thus we cannot infer a rule for telling which of the two is preferable. Similarly, it is impossible to do a proper comparison among $b^{D1}$ and $B^{D2}$. On the other hand, an interesting result is that the performance of the iteration with $b^S$ seems to be on par with the better of the two.

Figure~\ref{b2} reports the same comparison for the problem P2. The results are less pronounced than on the previous example: since the entries of the bilinear form $b$ are generated randomly, the difference between the ``left'' and ``right'' branches of the binary tree should be less marked than in P1, where the two directions are willingly unbalanced. Nevertheless, the symmetrized bilinear form consistently yields slightly lower iteration counts. 

Therefore, based on these results, we suggest to apply the Perron iteration and Newton methods on the symmetrized bilinear form instead of the original one.

\section{Conclusions}\label{sec:conc}
In this paper we presented several possible implementation variants of the Perron vector-based iteration introduced in \cite{perronarxiv}. A Newton method based on the same formulation of the problem is slightly less effective than the original iteration, although it maintains the same good convergence properties for close-to-critical problem. Moreover, we highlight the fact that there is a family of possible modifications to the bilinear form $b$ that alter the form of solution algorithms, but not the original equation \eqref{eq:qve} and its solution. One of these modifications, the symmetrization, seems to achieve better results than the original formulation of the numerical algorithms. 

Moreover, we present a couple of theoretical results on quadratic vector
equations that show how to ensure that the obtained solution is the desired
one, and how to deal with the problems in which an irreducibility assumption
is not satisfied.

\bibliographystyle{wileyj} \bibliography{paper_nsmc}

\end{document}